\theoremstyle{definition}
\newtheorem{dfn}{Definition}[section]
\newtheorem{thm}[dfn]{Theorem}
\newtheorem{prop}[dfn]{Proposition}
\newtheorem{lem}[dfn]{Lemma}
\newtheorem{cor}[dfn]{Corollary}
\newtheorem{remark}[dfn]{Remark}
\numberwithin{equation}{section}
\title[Single logarithmic stability for quantitative unique continuation]{New quantitative unique continuation result for elliptic equations}
\author{Mourad Choulli}
\address{Universit\'{e} de Lorraine, 34 cours L\'{e}opold, 54052 Nancy cedex, France}
\email{mourad.choulli@univ-lorraine.fr}
\author{Hiroshi Takase}
\address{Department of Mathematics, Okayama University, 3-1-1 Tsushima-naka, Kita-ku, Okayama 700-8530, Japan}
\email{takase@math.okayama-u.ac.jp}
\date{\today}
\keywords{quantitative unique continuation, elliptic equations, Carleman type inequality, Stokes equation}
\subjclass[2020]{35R25, 35J15, 58J05, 76D07}
\begin{document}
\begin{abstract}
We prove a new quantitative unique continuation result for elliptic equations from Cauchy data. We provide a simple and direct proof based only on a Carleman inequality. Similar result for the Stokes equation is also shown.
\end{abstract}

\maketitle

\section{Introduction}\label{section1}

The (strong) unique continuation for elliptic equations (from Cauchy or interior data) is an old problem and it is completely solved even for equations with unbounded lower-order coefficients (e.g., Koch-Tataru \cite{Koch2001a}). However, the optimal quantitative version of the unique continuation remains an open problem in the case of a $C^{0,1}$ domain and an $H^2$ solution (or less). By optimal, we mean a stability inequality with a single logarithmic continuity modulus. An optimal stability inequality for a $C^{0,1}$ domain and a $C^{1,\alpha}$ solution has been established by Bourgeois-Dard\'{e} \cite{Bourgeois2010a} (Laplace operator) and Choulli \cite{Choulli2016} (general case) with an  improved proof in Bellassoued-Choulli \cite{Bellassoued2023}. The case of a $C^{1,1}$ domain and an $H^2$ solution has been obtained in Bourgeois \cite{Bourgeois2010} (Laplace operator) and Choulli \cite{Choulli2024a} (general case extended to an $H^\eta$ solution, $3/2<\eta\le 2$, and semilinear elliptic equations). Other results for second-order partial differential equations, including observability inequalities, can be found in Choulli \cite{Choulli}. A non-optimal stability inequality has been established by Choulli \cite{Choulli2020} for  a $C^{0,1}$ domain and an $H^2$ solution. Specifically, the stability inequality in \cite{Choulli2020} has a double logarithmic continuity modulus.

In this short note, we relax in the case of an $H^2$ solution the $C^{1,1}$ regularity  of the domain by imposing a particular geometric configuration. But the main novelty is that we provide a direct and simple proof. We remind the reader that the proofs given in the reference cited above are quite technical and are broken down into three steps. More precisely, it is a matter of quantifying the continuation from the boundary to an interior subdomain, the continuation from an interior subdomain to another interior subdomain and the continuation from an interior subdomain to the boundary.

Before stating our main result, we introduce the definitions and the notations we need. Let $n\ge 2$ be an integer, $B\subset\mathbb{R}^n$ be a $C^2$ bounded open set with boundary $\mathcal{S}$ and $\Omega\Supset B$ be a $C^{0,1}$ bounded domain such that $D:=\Omega\setminus\overline{B}$ is connected. The boundary of $\Omega$ will be denoted by $\Gamma$. 

Throughout this text, we use the Einstein summation convention for quantities with indices. If in any term appears twice, as both an upper and lower index, that term is assumed to be summed from $1$ to $n$. Henceforth, all functions are assumed to be complex-valued unless otherwise specified.

Let $(g_{k\ell})\in W^{1,\infty}(D;\mathbb{R}^{n\times n})$ be a symmetric matrix-valued function satisfying, for some  $\rho>0$, 
\[
g_{k\ell}(x)\xi^k\xi^\ell\ge \rho|\xi|^2\quad x\in D,\; \xi=(\xi^1,\ldots,\xi^n) \in\mathbb{R}^n.
\]
Note that $(g^{k\ell})$ the matrix inverse to $(g_{k\ell})$ is uniformly positive definite as well. Recall that the Laplace-Beltrami operator associated with  the metric tensor $g=g_{k\ell}dx^k\otimes dx^\ell$ is given by
\[
\Delta_g u:=(1/\sqrt{|g|})\partial_k\left(\sqrt{|g|}g^{k\ell}\partial_\ell u\right),
\]
where $|g|=\mbox{det} (g)$. For convenience, we recall the following usual notations
\begin{align*}
&\langle X,Y\rangle=g_{k\ell}X^kY^\ell,\quad X=X^k\frac{\partial}{\partial x_k},\quad Y=Y^k\frac{\partial}{\partial  x_k},
\\
&\nabla_g w=g^{k\ell}\partial_k w\frac{\partial}{\partial x_\ell},
\\
&|\nabla_g w|_g^2=\langle\nabla_g w,\nabla_g \bar{w}\rangle=g^{k\ell}\partial_k w\partial_\ell \bar{w}, 
\\
&\nu_g=(\nu_g)^k\frac{\partial}{\partial x_k},\quad (\nu_g)^k=\frac{g^{k\ell}\nu_\ell}{\sqrt{g^{\alpha\beta}\nu_\alpha\nu_\beta}},
\\
&\partial_{\nu_g}w=\langle\nu_g,\nabla_g w\rangle, 
\end{align*}
where $\nu$ denotes the outer unit normal vector field to $\partial D$. Also, define the tangential gradient $\nabla_{\tau_g} w$ with respect to $g$ by
\[
\nabla_{\tau_g} w:=\nabla_g w-(\partial_{\nu_g} w)\nu_g.
\]
We find that $|\nabla_{\tau_g} w|_g^2=|\nabla_g w|_g^2-|\partial_{\nu_g} w|^2$ holds.

We define the operator $P$ by
\[
Pu:=-\Delta_g u+\langle X,\nabla_g u\rangle+p u,
\]
where $(X^1,\ldots,X^n)\in L^\infty(D;\mathbb{C}^n)$ and $p\in L^r(D)$, where $r:=n$ when $n\ge 3$ and $r>2$ when $n=2$.

It is worth noting that the magnetic Laplace-Beltrami operator can be written in the form $P$. We recall that the magnetic Laplace-Beltrami operator is an operator of the form
\[
L=(1/\sqrt{|g|})(\partial_k+ia_k)\sqrt{|g|}g^{k\ell}(\partial_\ell+ia_\ell),
\]
where $a=(a_1,\ldots ,a_n)\in W^{1,\infty}(D,\mathbb{R}^n)$.

The volume form in $D$ with respect to $g$ will be denoted by $dV_g:=\sqrt{|g|}dx$ and $dS_g:=\sqrt{|g|}dS$ denotes the surface element on $\partial D$ with respect to $g$.

Let
\[
\mathcal{C}(u):=\|Pu\|_{L^2(D)}+\|u\|_{H^1(\Gamma)}+\|\partial_{\nu_g}u\|_{L^2(\Gamma)},
\]
where
\begin{align*}
&\|Pu\|_{L^2(D)}:=\left(\int_D|Pu|^2dV_g\right)^{1/2},
\\
&\|u\|_{H^1(\Gamma)}:=\left(\int_\Gamma(|\nabla_{\tau_g}u|_g^2+|u|^2)dS_g\right)^{1/2},
\\
&\|\partial_{\nu_g}u\|_{L^2(\Gamma)}:=\left(\int_\Gamma|\partial_{\nu_g}u|^2dS_g\right)^{1/2}.
\end{align*}

The main result of this note is the following theorem.

\begin{thm}\label{interpolation_inequality}
Let $0\le\eta<2$ and $\zeta:=(g,X,p,B,\Omega,\eta)$. Then there exist constants $\mathbf{c}=\mathbf{c}(\zeta)>0$ and $c=c(\zeta) >0$ such that for any $u\in H^2(D)$ and $s\ge 1$ we have
\begin{equation}\label{quc}
\mathbf{c}\|u\|_{H^\eta(D)}\le e^{cs}\mathcal{C}(u)+s^{-(2-\eta)/2}\|u\|_{H^2(D)}.
\end{equation}
\end{thm}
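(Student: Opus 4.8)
The plan is to derive everything from a single Carleman inequality for the principal part $-\Delta_g$, following the three-to-one simplification announced in the introduction. I would fix a weight $\varphi=e^{\lambda\psi}$, where $\psi\in C^2(\overline D)$ is chosen with non-vanishing gradient on $\overline D$ (so that, for $\lambda$ large, $\varphi$ is a strongly pseudoconvex weight for $-\Delta_g$ and the usual Carleman estimate carrying the gradient term holds) and, crucially, so that its normal derivative has the favorable sign on the inner boundary $\mathcal S$. This sign condition is exactly where the particular geometric configuration between $B$ and $\Omega$ enters: since $B$ is $C^2$, one takes $\psi$ to be a distance-type function whose level sets sweep $D$ from $\Gamma$ towards $\mathcal S$, so that the boundary integrand produced on $\mathcal S$ by the integration by parts is nonnegative and may simply be discarded. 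The $C^2$ regularity of $\mathcal S$ is used only here, to justify the boundary integration by parts and to control the curvature terms on $\mathcal S$; on $\Gamma$ no such regularity is needed, because the full Cauchy data is available and the corresponding boundary integrand is kept.

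With such a $\varphi$ at hand, I would apply to $u\in H^2(D)$ the Carleman inequality
\begin{equation*}
\int_D\bigl(s|\nabla_g u|_g^2+s^3|u|^2\bigr)e^{2s\varphi}\,dV_g\le C\int_D|\Delta_g u|^2e^{2s\varphi}\,dV_g+\mathcal B_\Gamma(u)+\mathcal B_{\mathcal S}(u),
\end{equation*}
where $\mathcal B_\Gamma$ and $\mathcal B_{\mathcal S}$ collect the boundary contributions. By the sign choice above, $\mathcal B_{\mathcal S}(u)\le 0$ and is dropped. Writing $-\Delta_g u=Pu-\langle X,\nabla_g u\rangle-pu$ and using $X,p\in L^\infty$, the term $\int_D|\Delta_g u|^2e^{2s\varphi}\,dV_g$ is bounded by $2\int_D|Pu|^2e^{2s\varphi}\,dV_g$ plus a multiple of $\int_D(|\nabla_g u|_g^2+|u|^2)e^{2s\varphi}\,dV_g$; the latter is absorbed into the left-hand side once $s\ge s_0$ for some $s_0=s_0(\zeta)\ge1$, since the left-hand side carries the extra powers $s$ and $s^3$. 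Bounding the rest crudely by $e^{2s\varphi}\le e^{2s\max_{\overline D}\varphi}$ on $D$ and $e^{2s\varphi}\le e^{2s\max_{\Gamma}\varphi}$ on $\Gamma$ (the polynomial factors in $s$ being swallowed by the exponential), one obtains $\int_D|Pu|^2e^{2s\varphi}\,dV_g\le e^{cs}\|Pu\|_{L^2(D)}^2$ and $\mathcal B_\Gamma(u)\le e^{cs}\bigl(\|u\|_{H^1(\Gamma)}^2+\|\partial_{\nu_g}u\|_{L^2(\Gamma)}^2\bigr)$, hence $s^3\int_D|u|^2e^{2s\varphi}\,dV_g\le Ce^{cs}\mathcal C(u)^2$ for $s\ge s_0$.

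To pass from this weighted estimate to $\|u\|_{H^\eta(D)}$ on all of $D$ — including the region near $\mathcal S$ where the weight is smallest — I would simply bound $e^{2s\varphi}\ge e^{2s\min_{\overline D}\varphi}$ globally and use the uniform positivity of $g$, obtaining $s^3\|u\|_{L^2(D)}^2\le Ce^{2s(\max\varphi-\min\varphi)}\mathcal C(u)^2$, that is $s^{\eta/2}\|u\|_{L^2(D)}\le s^{3/2}\|u\|_{L^2(D)}\le Ce^{cs}\mathcal C(u)$ for $s\ge1$, since $\eta<2<3$. Then the interpolation inequality $\|u\|_{H^\eta(D)}\le C\|u\|_{L^2(D)}^{1-\eta/2}\|u\|_{H^2(D)}^{\eta/2}$ (valid on the Lipschitz domain $D$), combined with Young's inequality balanced so that the $H^2$ factor carries the weight $s^{-(2-\eta)/2}$, gives
\begin{equation*}
\mathbf c\|u\|_{H^\eta(D)}\le s^{\eta/2}\|u\|_{L^2(D)}+s^{-(2-\eta)/2}\|u\|_{H^2(D)}\le e^{cs}\mathcal C(u)+s^{-(2-\eta)/2}\|u\|_{H^2(D)},
\end{equation*}
which is \eqref{quc} for $s\ge s_0$. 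The remaining range $1\le s\le s_0$ is trivial after shrinking $\mathbf c$, because there $s^{-(2-\eta)/2}\|u\|_{H^2(D)}$ already dominates a fixed multiple of $\|u\|_{H^\eta(D)}$.

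I expect the genuine difficulty to lie entirely in the first step: producing a Carleman inequality whose boundary term on the data-free inner boundary $\mathcal S$ has a definite sign, which is what forces the weight, and hence the geometric configuration relating $B$ to $\Omega$, to be chosen with care. Once such a weight is available, the remainder is a short and robust combination of absorption of the lower-order terms, a crude global lower bound on the weight, and a single interpolation inequality.
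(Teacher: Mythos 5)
There is a genuine gap, and it sits exactly where you predicted the difficulty would be: the treatment of the boundary term on the inner boundary $\mathcal S$. You claim that a suitable distance-type weight makes the whole contribution $\mathcal B_{\mathcal S}(u)$ nonpositive, so that it can be discarded. No choice of weight can achieve this for all $u\in H^2(D)$: the boundary integrand produced by the Carleman integration by parts contains the normal-derivative part and the tangential-derivative part of $\nabla_g u$ with \emph{opposite} signs (weighted by $\partial_{\nu_g}\varphi$), so fixing the sign of $\partial_{\nu_g}\varphi$ on $\mathcal S$ makes one of the two favorable and the other unfavorable. This is why the Carleman inequality actually used in the paper (Proposition \ref{global_Carleman_estimate}) keeps $\int_{\mathcal S}e^{2s\varphi}\sigma(|\partial_{\nu_g}u|^2+\sigma^2|u|^2)\,dS_g$ on the good side but is forced to leave $\int_{\mathcal S}e^{2s\varphi}\sigma|\nabla_{\tau_g}u|_g^2\,dS_g$ on the right-hand side. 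A quick sanity check confirms your step must fail: dropping $\mathcal B_{\mathcal S}$ leads you to $s^{3}\|u\|_{L^2(D)}^2\le Ce^{cs}\mathcal C(u)^2$, i.e.\ Lipschitz stability for the Cauchy problem with no a priori bound on $u$, which contradicts the Hadamard ill-posedness of that problem (and would make the term $s^{-(2-\eta)/2}\|u\|_{H^2(D)}$ in \eqref{quc} superfluous — a red flag, since that term is the whole point of the single-logarithmic statement).

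The correct mechanism, which your argument is missing, is the following: choose $\phi>0$ in $D$ with $\phi_{|\mathcal S}=0$ and $\min_{\overline D}|\nabla\phi|>0$, so that $\varphi=e^{\gamma\phi}$ attains its minimum value $1$ precisely on $\mathcal S$. The leftover tangential term is then bounded, via the trace inequality, by $e^{2s}s\gamma\,\|u\|_{H^2(D)}^2$, with the \emph{same} exponential factor $e^{2s}$ that appears in the lower bound $s^3e^{2s}\int_D|u|^2\,dV_g$ for the left-hand side (since $\varphi\ge1$ on $D$). Dividing by $e^{2s}$ yields $\mathbf c_0\|u\|_{L^2(D)}\le e^{c_0s}\mathcal C(u)+s^{-1}\|u\|_{H^2(D)}$, and it is this $s^{-1}\|u\|_{H^2(D)}$ remainder — not the range $1\le s\le s_0$ — that generates the second term of \eqref{quc} after interpolation. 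Your remaining steps (absorbing the first-order and zeroth-order parts of $P$ for large $s$, the crude $e^{cs}$ bounds on $\Gamma$, and the interpolation-plus-Young balancing with $\varepsilon=s^{-\eta/2}$) match the paper and are fine; only the handling of $\mathcal S$ needs to be replaced as above.
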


It is worth remarking that Theorem \ref{interpolation_inequality} quantifies the unique continuation from Cauchy data on $\Gamma$. Indeed, if $u\in H^2(\Omega)$ satisfies $\mathcal{C}(u)=0$, then from \eqref{quc} $\|u\|_{H^\eta(D)}\le s^{-(2-\eta)/2}\|u\|_{H^2(D)}$ for all $s\ge 1$, and thus $u$ is identically equal to zero.

By a standard minimizing argument with respect to the parameter $s\ge 1$, we obtain the single logarithmic stability. Let
\[
\Phi_{\eta,c} (r)=r^{-1}\chi_{]0,e^c]}(r)+(\log r)^{-(2-\eta)/2}\chi_{]e^c,\infty [}(r),\quad 0\le\eta<2,\; c>0.
\]
Here $\chi_J$ denotes the characteristic function of the interval $J$.

\begin{cor}
Let $0\le\eta<2$ and $\zeta:=(g,X,p,B,\Omega,\eta)$. Then there exist constants $\mathbf{c}=\mathbf{c}(\zeta)>0$ and $c=c(\zeta)>0$  such that for any $u\in H^2(D)\setminus \{0\}$  we have
\[
\mathbf{c}\|u\|_{H^\eta(D)}\le \Phi_{\eta,c} \left(\|u\|_{H^2(D)}/\mathcal{C}(u)\right)\|u\|_{H^2(D)}.
\]
\end{cor}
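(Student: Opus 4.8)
The plan is to derive the corollary from Theorem~\ref{interpolation_inequality} by optimizing the inequality \eqref{quc} over the free parameter $s\ge 1$. Fix $u\in H^2(D)\setminus\{0\}$. If $u$ satisfies $\mathcal{C}(u)=0$ then Theorem~\ref{interpolation_inequality} forces $u\equiv 0$, contradicting $u\neq 0$; hence we may assume $\mathcal{C}(u)>0$ and set $R:=\|u\|_{H^2(D)}/\mathcal{C}(u)\in(0,\infty)$. Dividing \eqref{quc} by $\mathcal{C}(u)$, the estimate reads
\[
\mathbf{c}\,\frac{\|u\|_{H^\eta(D)}}{\|u\|_{H^2(D)}}\le e^{cs}R^{-1}+s^{-(2-\eta)/2},
\qquad s\ge 1,
\]
after also dividing by $\|u\|_{H^2(D)}$. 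The right-hand side is a function of $s$ that I want to make as small as possible by a judicious choice of $s$, and the resulting bound should reproduce $\Phi_{\eta,c}(R)$ up to multiplicative constants.

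The natural choice is to balance the two competing terms. First I would consider the regime $R\le e^c$: here the factor $R^{-1}\ge e^{-c}$ is not small, and the first term $e^{cs}R^{-1}$ is already at least $e^{cs-c}\ge 1$ for $s\ge 1$, so taking $s=1$ gives a right-hand side bounded by $e^{c}R^{-1}+1\le C R^{-1}$ because $R^{-1}\ge e^{-c}$ controls the constant term; this matches the $r^{-1}\chi_{]0,e^c]}$ branch of $\Phi_{\eta,c}$. In the regime $R>e^c$ one wants the two terms comparable: choosing $s$ so that $e^{cs}\approx R$, i.e. $s\approx (\log R)/c$, makes the first term of order $1$ while the second becomes $s^{-(2-\eta)/2}\approx ((\log R)/c)^{-(2-\eta)/2}$, which is exactly the order of $(\log R)^{-(2-\eta)/2}$ up to the constant $c^{(2-\eta)/2}$. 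One checks that $s=(\log R)/c\ge 1$ in this regime precisely because $R>e^c$ gives $\log R>c$, so the choice is admissible.

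Assembling the two cases and absorbing the finitely many constants ($e^c$, $c^{(2-\eta)/2}$, and the comparison constants between the balanced terms) into a single new constant, I obtain
\[
\frac{\|u\|_{H^\eta(D)}}{\|u\|_{H^2(D)}}\le \mathbf{c}^{-1}\,C\,\Phi_{\eta,c}(R),
\]
which after renaming the constant $\mathbf{c}$ is the claimed inequality. The only genuinely delicate point is bookkeeping: one must verify that the admissibility constraint $s\ge 1$ is respected in each regime (this is why the threshold in $\Phi_{\eta,c}$ is exactly $e^c$ rather than some other value), and that the piecewise definition of $\Phi_{\eta,c}$ glues continuously enough that a single multiplicative constant covers both branches. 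The main obstacle, though minor, is simply tracking how the exponential $e^{cs}$ interacts with the boundary $s=1$ so that the $]0,e^c]$ branch is handled correctly; once the parameter choice $s=\max\{1,(\log R)/c\}$ is fixed, the rest is the routine minimization argument announced in the text.
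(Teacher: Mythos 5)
Your overall strategy (optimize \eqref{quc} over $s\ge 1$, splitting at a threshold of the form $e^{\lambda c}$) is exactly the ``standard minimizing argument'' the paper invokes without detail, and your treatment of the case $\mathcal{C}(u)=0$ and of the regime $R\le e^c$ is correct. But the key regime $R>e^c$ contains a genuine error. You choose $s=(\log R)/c$ so that $e^{cs}=R$, and you yourself note that this makes the first term $e^{cs}R^{-1}$ ``of order $1$''. A term of order $1$ on the right-hand side of $\mathbf{c}\,\|u\|_{H^\eta(D)}/\|u\|_{H^2(D)}\le e^{cs}R^{-1}+s^{-(2-\eta)/2}$ yields only the trivial bound $\mathbf{c}\|u\|_{H^\eta(D)}\le C\|u\|_{H^2(D)}$; it is not dominated by $(\log R)^{-(2-\eta)/2}$, which tends to $0$ as $R\to\infty$. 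Concretely, with your choice $e^{cs}\mathcal{C}(u)=R\,\mathcal{C}(u)=\|u\|_{H^2(D)}$, so the data term has not been made small at all and the point of the optimization is defeated. Your instinct that ``one wants the two terms comparable'' is right, but $e^{cs}=R$ does not balance them: balancing requires $e^{cs}R^{-1}=s^{-(2-\eta)/2}$, i.e.\ $e^{cs}=R\,s^{-(2-\eta)/2}$, which is smaller than $R$ by a logarithmic factor.

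The repair is short. Take for instance $s=(\log R)/(2c)$, admissible for $R>e^{2c}$; then $e^{cs}R^{-1}=R^{-1/2}$, and since $r\mapsto(\log r)^{(2-\eta)/2}\,r^{-1/2}$ is bounded on $(1,\infty)$ one gets $R^{-1/2}\le C_\eta(\log R)^{-(2-\eta)/2}$, so both terms are $O\bigl((\log R)^{-(2-\eta)/2}\bigr)$ as required. This shifts the threshold from $e^c$ to $e^{2c}$, which is harmless because the corollary only asserts the existence of \emph{some} constant $c(\zeta)$ in $\Phi_{\eta,c}$ --- it need not coincide with the $c$ of Theorem \ref{interpolation_inequality}. (Your parenthetical claim that the threshold ``is exactly $e^c$ rather than some other value'' is therefore also slightly off: any threshold $e^{\lambda c}$ with $\lambda>1$ works after renaming the constant, and some such enlargement is in fact needed for the second branch to close.) The regime $R\le e^{2c}$ is then handled with $s=1$ exactly as you did.
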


%\begin{proof}
%When $\|u\|_{H^2(D)}>e^c\mathcal{C}(u)>0$, we choose $s\ge 1$ so that
%\[
%(e^c\mathcal{C}(u)\le )e^{cs}\mathcal{C}(u)=s^{-(2-\eta)/2}\|u\|_{H^2(D)}(\le \|u\|_{H^2(D)}),
%\]
%that is,
%\[
%s^{(2-\eta)/2}e^{cs}=\frac{\|u\|_{H^2(D)}}{\mathcal{C}(u)}(>e^c>1).
%\]
%Hence, there exists $c'>c$ independent of $s\ge 1$ such that
%\[
%c's\ge \log\frac{\|u\|_{H^2(D)}}{\mathcal{C}(u)}.
%\]
%Therefore, we have
%\[
%\mathbf{c}\|u\|_{H^\eta(D)}\le 2s^{-(2-\eta)/2}\|u\|_{H^2(D)}\le 2\|u\|_{H^2(D)}c'^{(2-\eta)/2}\left(\log\frac{\|u\|_{H^2(D)}}{\mathcal{C}(u)}\right)^{-(2-\eta)/2}.
%\]
%When $\|u\|_{H^2(D)}\le e^c\mathcal{C}(u)$, we have
%\[
%\mathbf{c}\|u\|_{H^\eta(D)}\le \left(e^{cs}+e^cs^{-(2-\eta)/2}\right)\mathcal{C}(u)=2e^c\mathcal{C}(u)
%\]
%by taking $s=1$.
%\end{proof}

\begin{remark}
The same results hold for the interior case as well. More precisely, let $B$ be a $C^2$ bounded domain and $\Omega\Subset B$ be a $C^{0,1}$ bounded open set such that $D:=B\setminus\overline{\Omega}$ is connected. When the measurement is made on $\Gamma:=\partial \Omega$, we can prove the same logarithmic stability as the one in Theorem \ref{interpolation_inequality}. The interior problem of this type is discussed in Choulli-Takase \cite{Choulli2025}.
\end{remark}

\section{Proof of Theorem \ref{interpolation_inequality}}

Let $B$, $\Omega$ and $D$ be  as in Section \ref{section1}. Recall that $\Gamma=\partial\Omega$ and $\mathcal{S}=\partial B$. Then, according to \cite[Lemma 3.1]{Choulli2025}, there exists $\phi\in C^2(\overline{D})$ satisfying
\begin{equation}\label{weight_function}
\begin{cases}
\phi>0,\quad \mbox{in}\; D,
\\
\phi_{|\mathcal{S}}=0,
\\
\displaystyle \delta:=\min_{\overline{D}}|\nabla\phi|>0.
\end{cases}
\end{equation}
Figure \ref{geometric_setup} below provides an illustration of this geometric configuration.

\begin{figure}[htbp]
\centering
\begin{tikzpicture}
% Draw the ellipse
\draw[fill=gray, fill opacity=0.5, line width=1pt] (-3.8,-0.8) -- (-3.5, 1.2) -- (-2.0, 2.2) -- ( 0.0, 1.7) -- ( 1.6, 2.4) -- ( 3.8, 2.0) -- ( 3.2, 0.8) -- ( 4.8,-0.2) -- ( 3.8,-1.8) -- ( 2.0,-2.5) -- ( 0.3,-2.0) -- (-1.3,-2.6) -- (-3.7,-1.9) -- cycle;
\draw[fill=white ] (0,0) ellipse (2cm and 1cm); 

% Denote the domain and boundary
\node at (0.3,0.5) {$B$};
\node at (1.7,0.2) {$\mathcal{S}$};
\node at (2,1.5) {$D:=\Omega\setminus \overline{B}$};    
\node at (4.2,0.5) {$\Gamma$};
%\node at (-1.54,0.8) {$\Omega$};

% Draw the axis
\draw[->] (-4.5,0) -- (5,0); % x-axis
\draw[->] (0,-2.5) -- (0,2.5); % y-axis
\foreach \x in {-2,0,2}
\draw (\x,-0.1) -- (\x,0.1); % Scale on the x-axis
\foreach \y in {-1,1}
\draw (-0.1,\y) -- (0.1,\y); % Scale on the y-axis
%\node[below] at (4.5,0) {$x$}; 
%\node[left] at (0,2) {$y$};
\end{tikzpicture}
\caption{Illustration of the geometric setting: $\phi=0$ on $\mathcal{S}$, while the Cauchy data are given on $\Gamma$.}
\label{geometric_setup}
\end{figure}

We will use  the following Carleman type inequality to prove Theorem \ref{interpolation_inequality}. 

\begin{prop}\label{global_Carleman_estimate}
Let $\zeta_0:=(g,X,p,D,\phi)$, $\varphi:=e^{\gamma\phi}$ and $\sigma:=s\gamma\varphi$. There exist constants $\gamma_\ast=\gamma_\ast(\zeta_0)>0$, $s_\ast=s_\ast (\zeta_0)>0$ and $\mathbf{c}=\mathbf{c}(\zeta_0)>0$ such that for any $\gamma \ge \gamma_\ast$, $s\ge s_\ast$ and $u\in H^2(D)$ we have
\begin{align*}
&\mathbf{c}\left(\int_D e^{2s\varphi}\sigma(\gamma|\nabla_g u|_g^2+\gamma\sigma^2|u|^2)dV_g+\int_\mathcal{S} e^{2s\varphi}\sigma(|\partial_{\nu_g}u|^2+\sigma^2|u|^2)dS_g\right)
\\
&\hskip 3cm \le \int_D e^{2s\varphi}|Pu|^2dV_g+\int_\Gamma e^{2s\varphi}\sigma(|\nabla_g u|_g^2+\sigma^2|u|^2)dS_g
\\
&\hskip 8.5cm+\int_\mathcal{S} e^{2s\varphi}\sigma|\nabla_{\tau_g} u|_g^2dS_g.
\end{align*}
\end{prop}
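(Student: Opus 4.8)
The plan is to prove the estimate first for the principal part $-\Delta_g$ and then recover the full operator $P$ by absorption. Since $\langle X,\nabla_g u\rangle+pu$ is bounded pointwise by a constant times $|\nabla_g u|_g+|u|$, its weighted square integral is controlled by $C\int_D e^{2s\varphi}(|\nabla_g u|_g^2+|u|^2)\,dV_g$; because $\varphi\ge 1$ on $\overline{D}$ we have $\sigma\ge s\gamma$, so for $\gamma\ge\gamma_\ast$ and $s\ge s_\ast$ this is absorbed into the interior left-hand side $\int_D e^{2s\varphi}\sigma(\gamma|\nabla_g u|_g^2+\gamma\sigma^2|u|^2)\,dV_g$, whose coefficients $\gamma\sigma$ and $\gamma\sigma^3$ exceed any fixed constant. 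Hence it suffices to prove the inequality with $Pu$ replaced by $-\Delta_g u$. I would then conjugate by setting $w:=e^{s\varphi}u$ and computing
\[
L_\varphi w:=e^{s\varphi}(-\Delta_g)(e^{-s\varphi}w)=-\Delta_g w+2s\langle\nabla_g\varphi,\nabla_g w\rangle-(s^2|\nabla_g\varphi|_g^2-s\Delta_g\varphi)w,
\]
and splitting it into a formally self-adjoint and a formally skew-adjoint part with respect to $L^2(D,dV_g)$,
\[
P_+w:=-\Delta_g w-s^2|\nabla_g\varphi|_g^2\,w,\qquad P_-w:=2s\langle\nabla_g\varphi,\nabla_g w\rangle+s(\Delta_g\varphi)w,
\]
so that $L_\varphi w=P_+w+P_-w$ and $\int_D|L_\varphi w|^2dV_g=\int_D|P_+w|^2dV_g+\int_D|P_-w|^2dV_g+2\,\mathrm{Re}\int_D P_+w\,\overline{P_-w}\,dV_g$.

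The heart of the matter is the cross term, which I would integrate by parts over $D$ to obtain an \emph{identity}
\[
2\,\mathrm{Re}\int_D P_+w\,\overline{P_-w}\,dV_g=\int_D Q\,dV_g+\int_{\partial D}R\,dS_g,
\]
where $Q$ is the interior density and $R$ the boundary density on $\partial D=\Gamma\cup\mathcal{S}$. Writing $\varphi=e^{\gamma\phi}$, so $\nabla_g\varphi=\gamma\varphi\nabla_g\phi$ and the Hessian of $\varphi$ carries a dominant term $\gamma^2\varphi\,\nabla_g\phi\otimes\nabla_g\phi$, the nondegeneracy $\min_{\overline D}|\nabla\phi|=\delta>0$ from \eqref{weight_function} yields the convexity needed for the Hörmander mechanism. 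Choosing $\gamma\ge\gamma_\ast$ large to beat the $\gamma$-independent errors (coming from $\Delta_g\varphi$, the derivatives of $g$, and the off-diagonal Hessian of $\phi$) and then $s\ge s_\ast$, I get the interior lower bound $\int_D Q\,dV_g\ge\mathbf{c}\int_D\sigma(\gamma|\nabla_g w|_g^2+\gamma\sigma^2|w|^2)\,dV_g$. This is the standard two-large-parameter computation and is the routine part.

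The delicate point, and the one I expect to be the main obstacle, is the boundary bookkeeping, namely extracting exactly the structure claimed on $\Gamma$ and $\mathcal{S}$. On $\mathcal{S}$ I would exploit the two special features of $\phi$: since $\phi_{|\mathcal{S}}=0$, one has $\varphi_{|\mathcal{S}}=1$ and $\nabla_g\phi$ is purely normal there, $\nabla_g\phi=(\partial_{\nu_g}\phi)\nu_g$ with $\nabla_{\tau_g}\phi=0$; moreover, as $\phi>0$ in $D$ vanishes on $\mathcal{S}$ while $\nu$ is the outer normal of $D$ (pointing into $B$), we have $\partial_{\nu_g}\phi<0$ on $\mathcal{S}$, with $|\partial_{\nu_g}\phi|$ bounded below by a multiple of $\delta$. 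Consequently the $\mathcal{S}$-density $R$ organizes into a combination whose sign is governed by $-\partial_{\nu_g}\phi>0$: the terms $|\partial_{\nu_g}w|^2$ and $\sigma^2|w|^2$ appear with the favourable sign and are kept on the left, while the tangential term $|\nabla_{\tau_g}w|^2$ is moved to the right, producing precisely $\int_\mathcal{S}\sigma(|\partial_{\nu_g}u|^2+\sigma^2|u|^2)$ on the good side and $\int_\mathcal{S}\sigma|\nabla_{\tau_g}u|^2$ on the bad side. On $\Gamma$ no sign of $\partial_{\nu_g}\phi$ is available, so I would simply bound all $\Gamma$-contributions in absolute value by $C\int_\Gamma\sigma(|\nabla_g w|_g^2+\sigma^2|w|^2)$ and place them on the right.

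The final step is cosmetic: I would undo the conjugation through $\nabla_g w=e^{s\varphi}(\nabla_g u+s\nabla_g\varphi\,u)$, converting each $w$-integral into the corresponding $e^{2s\varphi}$-weighted $u$-integral (the cross terms in $|\nabla_g w|_g^2$ being absorbed by the leading powers exactly as in the first paragraph), and on $\mathcal{S}$ using $\varphi=1$ to identify the surface weights. This turns the estimate on $w$ into the stated inequality on $u$ and, after reinstating the lower-order terms $X$ and $p$, completes the proof. The only genuinely technical piece is verifying that the integration by parts in $R$ yields exactly the diagonal split $|\partial_{\nu_g}w|^2$ versus $|\nabla_{\tau_g}w|^2$ on $\mathcal{S}$; everything else is dominated absorption with the two parameters.
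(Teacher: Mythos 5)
Your proposal is correct and follows essentially the same route as the paper, which omits the argument and refers to \cite[Proposition 2.1]{CT2024}: the standard two-parameter conjugation $w=e^{s\varphi}u$ with the self-adjoint/skew-adjoint splitting, the interior lower bound driven by $\min_{\overline D}|\nabla\phi|=\delta>0$ and the convexification $\varphi=e^{\gamma\phi}$, and the boundary bookkeeping on $\mathcal{S}$ using $\phi_{|\mathcal{S}}=0$ (so $\nabla_{\tau_g}\varphi=0$ and $\partial_{\nu_g}\phi<0$ there), followed by absorption of the first-order terms $\langle X,\nabla_g u\rangle+pu$. The only cosmetic difference is that you handle complex-valued $u$ directly via $2\,\mathrm{Re}\int_D P_+w\,\overline{P_-w}\,dV_g$, whereas the paper suggests adding the real- and imaginary-part inequalities for $P=-\Delta_g$ before absorbing the lower-order terms; both are equally valid.
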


Note that only the integral term for the tangential derivative of $u$ on $\mathcal{S}$ appears in the right-hand side. Proposition \ref{global_Carleman_estimate} is proved by \cite[Proposition 2.1]{Choulli2025} and the proof is shown in Appendix \ref{appendix} for the sake of completeness.

\begin{proof}[Proof of Theorem \ref{interpolation_inequality}]

Fix $\gamma\ge \gamma_\ast$ in Proposition \ref{global_Carleman_estimate} and pick $u\in H^2(D)$. Henceforth, $\mathbf{c}_0=\mathbf{c}_0(\zeta_0)>0$ and $c_0=c_0(\zeta_0)>0$ denote generic constants, where $\zeta_0:=(g,X,p,B,\Omega)$. By \eqref{weight_function} and $\varphi_{|\mathcal{S}}=1$, the third term of the right-hand side in Proposition \ref{global_Carleman_estimate} can be written by
\begin{align*}
&\int_\mathcal{S} e^{2s\varphi}\sigma|\nabla_{\tau_g} u|^2 dS_g=e^{2s}s\gamma\|\nabla_{\tau_g}u\|_{L^2(\mathcal{S})}^2
\\
&\hskip 4cm\le e^{2s}s\gamma \|u\|_{H^1(\mathcal{S})}^2\le \mathbf{c}_0e^{2s}s\gamma \|u\|_{H^{2}(D)}^2.
\end{align*}

Let $\mathcal{C}=\mathcal{C}(u)$. Applying Proposition \ref{global_Carleman_estimate}, we get
\begin{align*}
&\mathbf{c}_0\left(\int_D e^{2s\varphi}s(|\nabla_g u|^2+s^2|u|^2)dV_g+\int_\mathcal{S} e^{2s\varphi}s(|\partial_{\nu_g}u|^2+s^2|u|^2)dS_g\right)
\\
&\hskip 6.5cm \le e^{c_0s}\mathcal{C}^2+se^{2s}\|u\|_{H^{2}(D)}^2,\quad s\ge s_\ast,
\end{align*}
where $s_\ast=s_\ast(\zeta_0)>0$ is a constant.

By
\begin{align*}
&\mathbf{c}_0\left(\int_D e^{2s\varphi}s(|\nabla_g u|^2+s^2|u|^2)dV_g+\int_\mathcal{S} e^{2s\varphi}s(|\partial_{\nu_g}u|^2+s^2|u|^2)dS_g\right)
\\
&\hskip 8.5cm \ge s^3e^{2s}\int_D |u|^2dV_g,
\end{align*}
we have
\begin{align*}
\mathbf{c}_0\|u\|_{L^2(D)}^2&\le e^{c_0s}\mathcal{C}^2+s^{-2}\|u\|_{H^{2}(D)}^2
\\
&\le \left(e^{(c_0/2)s}\mathcal{C}+s^{-1}\|u\|_{H^{2}(D)}\right)^2,\quad s\ge s_\ast,
\end{align*}
which implies
\begin{equation}\label{L^2}
\mathbf{c}_0\|u\|_{L^2(D)}\le e^{c_0s}\mathcal{C}+s^{-1}\|u\|_{H^{2}(D)},\quad s\ge s_\ast.
\end{equation}
By replacing $s$ by $s/s_\ast$,  we see that the above inequality holds for all $s\ge 1$, which means that the expected inequality holds when $\eta=0$.

When $0<\eta<2$, by the definition of $H^\eta(\Omega)$ as an interpolation space (e.g., \cite[p. 40, (9.1)]{Lions1972I}), the interpolation inequality (e.g., \cite[p. 19, Proposition 2.3]{Lions1972I}) and Young's inequality, for any $\varepsilon>0$ we have
\[
c_1\|u\|_{H^\eta(D)}\le \varepsilon^{-1}\|u\|_{L^2(D)}+\varepsilon^{(2-\eta)/\eta}\|u\|_{H^2(D)},
\]
where $c_1=c_1(D,\eta)>0$ is a constant. We take $\varepsilon:=s^{-\eta/2}$ and apply \eqref{L^2} to obtain the expected inequality
\[
\mathbf{c}\|u\|_{H^\eta(D)}\le e^{cs}\mathcal{C}+s^{-(2-\eta)/2}\|u\|_{H^2(D)},\quad s\ge 1,
\]
where $\mathbf{c}=\mathbf{c}(\zeta)>0$ and $c=c(\zeta)>0$ are constants.
\end{proof}

\section{Some extensions}

\subsection{$C^{j}$ bounded domain for $j\ge 2$}

We mention an extension for the case of an arbitrary $C^j$ bounded domain for $j\ge 2$. Although the following lemma is proved by the similar argument as in \cite[Lemma 1.9]{Choulli}, we provide the proof for the sake of completeness.

\begin{lem}\label{Cj}
Let $j\ge 2$ be an integer, $D$ be a $C^j$ bounded domain of $\mathbb{R}^n$ and $\Gamma$ be a nonempty open subset of $\partial D$. Then there exists $\phi\in C^j(\overline{D})$ satisfying \eqref{weight_function} with $\mathcal{S}:=\partial D\setminus\Gamma$.
\end{lem}
\begin{proof}
Let $\widetilde{D}$ be a $C^j$ bounded domain of $\mathbb{R}^n$ satisfying $D\subset \widetilde{D}$, $\mathrm{Int}(\widetilde{D}\setminus D)\neq \emptyset$ and $\partial D \setminus \Gamma \subset \partial\widetilde{D}$. Pick $\omega\subset \mathrm{Int}(\widetilde{D}\setminus D)$ be a nonempty open subset. According to \cite[Theorem 9.4.3]{Tucsnak2009}, there exists $\psi\in C^j(\overline{\widetilde{D}})$ satisfying
\[
\psi>0\; \text{in}\; \widetilde{D},\quad \psi=0\; \text{on}\; \partial\widetilde{D}\quad \text{and}\quad |\nabla\psi|>0\; \text{in}\; \overline{\widetilde{D}\setminus\omega}.
\]
We find that $\phi:=\psi_{\overline{D}}\in C^j(\overline{D})$ satisfies \eqref{weight_function} with $\mathcal{S}:=\partial D \setminus \Gamma$.
\end{proof}

Although results similar to the following theorem have been obtained in Bourgeois \cite{Bourgeois2010} (Laplace operator) and Choulli \cite{Choulli2024a} (general case), the proof can be simplified as shown in Theorem \ref{interpolation_inequality}.

\begin{thm}\label{ii_Cj}
Let $j\ge 2$ be an integer, $D$ be a $C^j$ bounded domain of $\mathbb{R}^n$, $\Gamma$ be a nonempty open subset of $\partial D$, $0\le\eta<2$ and $\zeta:=(g,X,p,D,\Gamma,\eta)$. Then there exist constants $\mathbf{c}=\mathbf{c}(\zeta)>0$ and $c=c(\zeta) >0$ such that for any $u\in H^2(D)$ and $s\ge 1$ we have
\[
\mathbf{c}\|u\|_{H^\eta(D)}\le e^{cs}\mathcal{C}(u)+s^{-(2-\eta)/2}\|u\|_{H^2(D)}.
\]
\end{thm}

\begin{proof}
Thanks to to Lemma \ref{Cj}, Proposition \ref{global_Carleman_estimate} remains valid also in the case $\mathcal{S}:=\partial D\setminus \Gamma$. The proof is completed in a manner analogous to that of Theorem \ref{interpolation_inequality}. 
\end{proof}

\subsection{Extension to the Stokes equation}

Let $n\ge 2$, $B$ and $\Omega$ be the same  as in Section \ref{section1}. Let $a=(a^1,\ldots,a^n)\in W^{1,\infty}(D;\mathbb{C}^n)$ and consider the Stokes equation
\begin{equation}\label{Stokes}
\begin{cases}
-\Delta u+(a\cdot\nabla)u+\nabla p=0\quad &\text{in}\; D,
\\
\mathrm{div}(u)=0\quad &\text{in}\; D,
\end{cases}
\end{equation}
where $u=(u^1,\ldots,u^n)$. 

We prove an optimal quantitative unique continuation from the Cauchy data  $(u_{|\Gamma},p_{|\Gamma},\partial_\nu u_{|\Gamma},\partial_\nu p_{|\Gamma})$, where we recall that $\Gamma=\partial\Omega$. Note that the pressure is determined only up to an additive constant in the Stokes equation in general. Since the Cauchy data include the trace $p_{|\Gamma}$, this additive constant is uniquely fixed. A similar result for a smooth domain has been established by Boulakia-Egloffe-Grandmont \cite{Boulakia2013}.

Define
\[
\mathfrak{C}(u,p)=\|u\|_{H^1(\Gamma)^n}+\|p\|_{H^1(\Gamma)}+\|\partial_\nu u\|_{L^2(\Gamma)^n}+\|\partial_\nu p\|_{L^2(\Gamma)}.
\]

\begin{thm}\label{interpolation_Stokes}
Let $0\le\eta<2$ and $\zeta:=(a,B,\Omega,\eta)$. Then, there exists $\mathbf{c}=\mathbf{c}(\zeta)>0$ and $c=c(\zeta)>0$ such that for any $(u,p)\in H^2(D)^{n+1}$ satisfying \eqref{Stokes} we have
\[
\mathbf{c}\|(u,p)\|_{H^\eta(D)^{n+1}}\le e^{cs}\mathfrak{C}(u,p)+s^{-(2-\eta)/2}\|(u,p)\|_{H^2(D)^{n+1}},\quad s\ge 1.
\]
\end{thm}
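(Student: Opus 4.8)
The plan is to reduce the Stokes system \eqref{Stokes} to a closed, weakly coupled second-order elliptic system for the vector $w:=(u,p)=(u^1,\dots,u^n,p)\in H^2(D)^{n+1}$, each component of which has the Euclidean Laplacian as principal part, and then to follow verbatim the argument in the proof of Theorem \ref{interpolation_inequality}.

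First I would eliminate the pressure from the principal coupling. Taking the divergence of the momentum equation and using $\divergence u=0$ yields $\Delta p=-(\partial_i a^k)(\partial_k u^i)$, so that
\[
-\Delta p-(\partial_i a^k)(\partial_k u^i)=0\quad\text{in }D.
\]
Since $a\in W^{1,\infty}(D;\mathbb{C}^n)$, the coefficients $\partial_i a^k$ lie in $L^\infty(D)$, hence $p$ solves a Poisson-type equation whose right-hand side is first order in $u$. Simultaneously, each velocity component satisfies $-\Delta u^i+a^k\partial_k u^i=-\partial_i p$, whose right-hand side is first order in $p$. Thus $w$ solves an $(n+1)\times(n+1)$ elliptic system with $L^\infty$ coefficients in which the coupling between $u$ and $p$ occurs only through first-order terms.

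Next I would apply Proposition \ref{global_Carleman_estimate} to each scalar equation separately, taking $g$ to be the Euclidean metric and choosing the first-order coefficient $X=a$ for the equations satisfied by the $u^i$ and $X=0$ for the equation satisfied by $p$ (all zeroth-order coefficients being zero); taking $\gamma_\ast$ and $s_\ast$ to be the largest of the finitely many resulting thresholds, I would then sum the $n+1$ inequalities. On the right-hand side the source terms become $\int_D e^{2s\varphi}|\nabla p|^2\,dV_g$ (from $\sum_i|\partial_i p|^2$) and a term bounded by $C\int_D e^{2s\varphi}|\nabla u|^2\,dV_g$ (from the pressure source, using $\|\nabla a\|_{L^\infty}<\infty$). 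The point is that the summed left-hand side controls $\mathbf{c}\int_D e^{2s\varphi}\sigma\gamma(|\nabla u|^2+|\nabla p|^2)\,dV_g$, and since $\sigma=s\gamma\varphi\ge s\gamma$ with $\varphi\ge 1$, with $\gamma$ fixed as in the proof of Theorem \ref{interpolation_inequality} and $s$ large enough these two domain source terms are absorbed into the left-hand side. This produces a system Carleman estimate for $w$ in exactly the form of Proposition \ref{global_Carleman_estimate}, with $|\nabla_g u|_g$ and $|u|$ replaced by $|\nabla w|$ and $|w|$, the only surviving boundary terms being the full $\Gamma$-term and the tangential $\mathcal{S}$-term.

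Finally I would repeat the three estimates from the proof of Theorem \ref{interpolation_inequality}. The $\mathcal{S}$-term is bounded by $e^{2s}s\gamma\|w\|_{H^1(\mathcal{S})}^2\le \mathbf{c}_0 e^{2s}s\gamma\|w\|_{H^2(D)}^2$ via the trace inequality; the $\Gamma$-term is bounded by $e^{c_0 s}\mathfrak{C}(u,p)^2$, upon writing $|\nabla w|^2=|\nabla_\tau w|^2+|\partial_\nu w|^2$ and recognizing that $\|\nabla_\tau u\|_{L^2(\Gamma)}$, $\|\nabla_\tau p\|_{L^2(\Gamma)}$, $\|\partial_\nu u\|_{L^2(\Gamma)}$, $\|\partial_\nu p\|_{L^2(\Gamma)}$ together with the zeroth-order traces are exactly the quantities appearing in $\mathfrak{C}(u,p)$; and keeping only the $s^3|w|^2$ part of the left-hand side yields the $\eta=0$ estimate $\mathbf{c}_0\|w\|_{L^2(D)}\le e^{c_0 s}\mathfrak{C}(u,p)+s^{-1}\|w\|_{H^2(D)}$ for $s\ge 1$. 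The interpolation inequality for $H^\eta(D)$ and Young's inequality with $\varepsilon=s^{-\eta/2}$ then promote this to the claimed bound for $0<\eta<2$. The main obstacle is the decoupling step: one must verify that the divergence-free condition really removes the second-order pressure contribution and leaves only first-order coupling, since it is precisely their first-order nature, together with the gain of the large factor $\sigma\gamma$ on the left, that makes the absorption possible; were the coupling of the same order as the principal part, the argument would break down.
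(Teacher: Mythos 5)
Your proposal is correct and follows essentially the same route as the paper: reduce to the $(n+1)\times(n+1)$ system $\Delta(u,p)=((a\cdot\nabla)u+\nabla p,\,-\partial_k a^j\partial_j u^k)$ via the divergence-free condition, apply Proposition \ref{global_Carleman_estimate} componentwise with the Euclidean metric, absorb the first-order coupling terms into the left-hand side using the large parameter $s$, and conclude as in Theorem \ref{interpolation_inequality}. The only cosmetic difference is that you place the convection term in the coefficient $X$ of $P$ rather than on the right-hand side with $P=-\Delta$ as the paper does; this changes nothing in the argument.
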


\begin{remark}
For a general $C^j$ domain $D$ with an integer $j\ge 2$, a result similar to that described in Theorem \ref{ii_Cj} is valid for the Stokes equation \eqref{Stokes}.
\end{remark}

As for Theorem \ref{interpolation_inequality}, Theorem \ref{interpolation_Stokes} yields the following single logarithmic stability inequality, where $\Phi_{\eta,c}$, $0\le \eta <2$ and $c>0$, is defined in Section \ref{section1}.

\begin{cor}
Let $0\le\eta<2$ and $\zeta:=(a,B,\Omega,\eta)$. Then, there exist constants $\mathbf{c}=\mathbf{c}(\zeta)>0$ and $c=c(\zeta)>0$ such that for any $(u,p)\in H^2(D)^{n+1}\setminus\{0\}$ we have
\[
\mathbf{c}\|(u,p)\|_{H^\eta(D)^{n+1}}\le \Phi_{\eta,c}\left(\|(u,p)\|_{H^2(D)^{n+1}}/\mathfrak{C}(u,p)\right)\|(u,p)\|_{H^2(D)^{n+1}}.
\]
\end{cor}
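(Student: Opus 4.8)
The plan is to deduce the corollary from Theorem \ref{interpolation_Stokes} by the elementary optimization in the free parameter $s\ge 1$, exactly as the first corollary is obtained from Theorem \ref{interpolation_inequality}. Fix $(u,p)\in H^2(D)^{n+1}\setminus\{0\}$ solving \eqref{Stokes} and abbreviate $N:=\|(u,p)\|_{H^\eta(D)^{n+1}}$, $M:=\|(u,p)\|_{H^2(D)^{n+1}}$ and $\mathfrak C:=\mathfrak C(u,p)$. Then $M>0$, and also $\mathfrak C>0$: were $\mathfrak C=0$, Theorem \ref{interpolation_Stokes} would give $\mathbf c\,N\le s^{-(2-\eta)/2}M$ for every $s\ge 1$, and letting $s\to\infty$ (using $\eta<2$) would force $N=0$, hence $(u,p)=0$, a contradiction. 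Thus $t:=M/\mathfrak C\in\,]0,\infty[$ is well defined and the theorem reads
\[
\mathbf c\,N\le e^{cs}\mathfrak C+s^{-(2-\eta)/2}M,\qquad s\ge 1.
\]
I would establish the corollary with the threshold constant doubled, i.e.\ with $\Phi_{\eta,2c}$ in place of $\Phi_{\eta,c}$; this is harmless, since the constant appearing in $\Phi$ is permitted to depend on $\zeta$.

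First I would treat the \emph{near} regime $t\le e^{2c}$: taking $s=1$ gives $\mathbf c\,N\le e^{c}\mathfrak C+M\le(e^{c}+e^{2c})\mathfrak C$, and since $\Phi_{\eta,2c}(t)M=t^{-1}M=\mathfrak C$ here, this is precisely the claimed bound after the numerical factor is absorbed into $\mathbf c$. In the \emph{far} regime $t>e^{2c}$ I would choose
\[
s:=\tfrac{1}{2c}\log t\ (>1),
\]
so that $e^{cs}=t^{1/2}$ and thus $e^{cs}\mathfrak C=t^{1/2}\mathfrak C=t^{-1/2}M$. The target is now $\Phi_{\eta,2c}(t)M=(\log t)^{-(2-\eta)/2}M$, and the key comparison is that $t^{1/2}$ beats every fixed power of $\log t$: for $t\ge e^{2c}$ one has $t^{-1/2}\le C(\log t)^{-(2-\eta)/2}$ with $C=C(c,\eta):=\sup_{t\ge e^{2c}}(\log t)^{(2-\eta)/2}t^{-1/2}<\infty$, so the boundary term is dominated by the target. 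The remaining term is $s^{-(2-\eta)/2}M=(2c)^{(2-\eta)/2}(\log t)^{-(2-\eta)/2}M$ by the choice of $s$. Summing, $\mathbf c\,N\le C'(\log t)^{-(2-\eta)/2}M=C'\,\Phi_{\eta,2c}(t)M$, and absorbing $C'$ into $\mathbf c$ closes this case as well.

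The two regimes together yield the statement. At the level of the corollary there is no genuine obstacle: it is a purely formal minimization, and the only care needed is the book-keeping near the threshold — resolved by enlarging the constant inside $\Phi$ from $c$ to $2c$ so that the selected $s$ remains $\ge 1$ — together with the elementary comparison $t^{-1/2}\le C(\log t)^{-(2-\eta)/2}$. The substance lies entirely upstream, in Theorem \ref{interpolation_Stokes}. I would expect the real difficulty there to be reducing the Stokes system to scalar equations amenable to Proposition \ref{global_Carleman_estimate}: presumably one takes the divergence of the momentum equation and uses $\divergence u=0$ to obtain a scalar Poisson-type equation for $p$, after which each component $u^j$ satisfies a scalar elliptic equation of the type handled in Theorem \ref{interpolation_inequality}; the scalar estimate is then applied componentwise and the resulting inequalities are summed. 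That reduction, rather than the present optimization, is where the analysis would concentrate.
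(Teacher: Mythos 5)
Your proof is correct and is exactly the ``standard minimizing argument with respect to the parameter $s\ge 1$'' that the paper invokes (without writing out) to pass from Theorem \ref{interpolation_Stokes} to this corollary: the case split at the threshold $t\le e^{2c}$ versus $t>e^{2c}$, the choice $s=\frac{1}{2c}\log t$, and the comparison $t^{-1/2}\le C(\log t)^{-(2-\eta)/2}$ are all sound, and enlarging the constant inside $\Phi$ from $c$ to $2c$ is harmless as you note. No issues.
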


\begin{proof}[Proof of Theorem \ref{interpolation_Stokes}]
In this proof, $\mathbf{c}_0=\mathbf{c}_0(a,B,\Omega)>0$ and $c_0=c_0(a,B,\Omega)>0$ will denote generic constants. Set $v:=(u,p)\in H^2(D)^{n+1}$, where $u$ and $p$ satisfy \eqref{Stokes}. Since $p$ satisfies in the distributional sense
\[
\Delta p=\mathrm{div}(\Delta u-(a\cdot \nabla)u)=-\partial_k a^j \partial_j u^k\quad \text{in}\; D,
\]
where we used the equation $\mathrm{div}(u)=0$, $v$ satisfies
\[
\Delta v=((a\cdot\nabla)u+\nabla p,-\partial_k a^j\partial_j u^k)\quad \text{in}\; D.
\]
From this equation, we have
\begin{equation}\label{inequality_v}
\mathbf{c}_0|\Delta v|^2\le |\nabla u|^2+|\nabla p|^2=|\nabla v|^2\quad \text{in}\; D,
\end{equation}
where the generic constant $\mathbf{c}_0$ depends on $\|a\|_{W^{1,\infty}(D)^n}$.

Fix $\gamma\ge \gamma_\ast$ in Proposition \ref{global_Carleman_estimate}. Let $\mathfrak{C}=\mathfrak{C}(v)$. Applying Proposition \ref{global_Carleman_estimate} to each component of $v=(v^1,\ldots,v^{n+1})$ with $g=\mathbf{I}$ and $P=-\Delta$, we get
\begin{align*}
&\mathbf{c}_0\left(\int_D e^{2s\varphi}s(|\nabla v^k|^2+s^2|v^k|^2)dx+\int_\mathcal{S} e^{2s\varphi}s(|\partial_{\nu}v^k|^2+s^2|v^k|^2)dS\right)
\\
&\hskip .2cm \le \int_D e^{2s\varphi}|\Delta v^k|^2dx+e^{c_0s}\left(\|v^k\|_{H^1(\Gamma)}^2+\|\partial_\nu v^k\|_{L^2(\Gamma)}^2\right)+se^{2s}\|v^k\|_{H^2(D)}^2,\quad s\ge s_\ast,
\end{align*}
where $s_\ast=s_\ast(a,B,\Omega)>0$ is a constant and $k\in\{1,\ldots,n+1\}$. By summing the above inequalities side by side on $k\in \{1,\ldots,n+1\}$, we obtain
\begin{align*}
&\mathbf{c}_0\left(\int_D e^{2s\varphi}s(|\nabla v|^2+s^2|v|^2)dx+\int_\mathcal{S} e^{2s\varphi}s(|\partial_{\nu}v|^2+s^2|v|^2)dS\right)
\\
&\hskip .2cm \le \int_D e^{2s\varphi}|\Delta v|^2dx+e^{c_0s}\left(\|v\|_{H^1(\Gamma)}^2+\|\partial_\nu v\|_{L^2(\Gamma)}^2\right)+se^{2s}\|v\|_{H^2(D)}^2,\quad s\ge s_\ast,
\end{align*}
Since we have by \eqref{inequality_v}
\[
\mathbf{c}_0 \int_D e^{2s\varphi}|\Delta v|^2dx\le \int_D e^{2s\varphi}|\nabla v|^2dx,
\]
it follows that
\begin{align*}
&\mathbf{c}_0\left(\int_D e^{2s\varphi}s(|\nabla v|^2+s^2|v|^2)dx+\int_\mathcal{S} e^{2s\varphi}s(|\partial_{\nu}v|^2+s^2|v|^2)dS\right)
\\
&\hskip 3cm \le \int_D e^{2s\varphi}|\nabla v|^2dx+c^{c_0s}\mathfrak{C}^2+se^{2s}\|v\|_{H^2(D)}^2,\quad s\ge s_\ast.
\end{align*}

By modifying $s_\ast$, we may and do assume $\mathbf{c}_0s-1\ge \mathbf{c}_0s/2$ so that
\[
\int_D e^{2s\varphi}|\nabla v|^2dx\le \frac{\mathbf{c}_0}{2}\int_D e^{2s\varphi}s|\nabla v|^2dx,\quad s\ge s_\ast.
\]

Therefore, we have
\begin{align*}
&\mathbf{c}_0\left(\int_D e^{2s\varphi}s(|\nabla v|^2+s^2|v|^2)dx+\int_\mathcal{S} e^{2s\varphi}s(|\partial_{\nu}v|^2+s^2|v|^2)dS\right)
\\
&\hskip 6cm \le e^{c_0s}\mathfrak{C}^2+se^{2s}\|v\|_{H^2(D)}^2,\quad s\ge s_\ast.
\end{align*}

By using
\begin{align*}
&\mathbf{c}_0\left(\int_D e^{2s\varphi}s(|\nabla v|^2+s^2|v|^2)dx+\int_\mathcal{S} e^{2s\varphi}s(|\partial_{\nu}v|^2+s^2|v|^2)dS\right)
\\
&\hskip 8cm \ge s^3e^{2s}\int_D |v|^2dx,
\end{align*}
we obtain
\[
\mathbf{c}_0\|v\|_{L^2(D)}\le e^{c_0s}\mathfrak{C}+s^{-1}\|v\|_{H^2(D)},\quad s\ge s_\ast.
\]
By replacing $s$ by $s/s_\ast$,  we see that the above inequality holds for all $s\ge 1$, which means that the expected inequality holds when $\eta=0$.

When $0<\eta<2$, by the interpolation inequality and Young's inequality as in the proof of Theorem \ref{interpolation_inequality}, for any $\varepsilon>0$ we have
\[
\mathbf{c}\|v\|_{H^\eta(D)}\le \varepsilon^{-1}\|v\|_{L^2(D)}+\varepsilon^{(2-\eta)/\eta}\|v\|_{H^2(D)},
\]
where $\mathbf{c}=\mathbf{c}(\zeta)>0$ is a constant. We complete the proof by taking $\varepsilon:=s^{-\eta/2}$ and applying the above inequality.
\end{proof}

\appendix

\section{Proof of Proposition \ref{global_Carleman_estimate}}\label{appendix}
\begin{proof}
According to \cite[Proposition 2.1]{Choulli2025}, there exist constants $\gamma_\ast=\gamma_\ast(g,D,\phi)\ge 1$, $s_\ast=s_\ast (g,D,\phi)\ge 1$ and $\mathbf{c}_0=\mathbf{c}_0(g,D,\phi)>0$ such that for any $\gamma \ge \gamma_\ast$, $s\ge s_\ast$ and $v\in H^2(D;\mathbb{R})$, we have
\begin{align*}
&\mathbf{c}_0\left(\int_D e^{2s\varphi}\sigma(\gamma|\nabla_g v|_g^2+\gamma\sigma^2 v^2)dV_g+\int_\mathcal{S} e^{2s\varphi}\sigma((\partial_{\nu_g}v)^2+\sigma^2 v^2)dS_g\right)
\\
&\hskip 3cm \le \int_D e^{2s\varphi}(\Delta_g v)^2dV_g+\int_\Gamma e^{2s\varphi}\sigma(|\nabla_g v|_g^2+\sigma^2 v^2)dS_g
\\
&\hskip 8.5cm+\int_\mathcal{S} e^{2s\varphi}\sigma|\nabla_{\tau_g} v|_g^2dS_g.
\end{align*}
By adding the inequalities applied to $v := \Re u$ and to $v := \Im u$ together, we obtain for any $\gamma \ge \gamma_\ast$, $s\ge s_\ast$ and $u\in H^2(D)$,
\begin{align*}
&\mathbf{c}_0\left(\int_D e^{2s\varphi}\sigma(\gamma|\nabla_g u|_g^2+\gamma\sigma^2|u|^2)dV_g+\int_\mathcal{S} e^{2s\varphi}\sigma(|\partial_{\nu_g} u|^2+\sigma^2|u|^2)dS_g\right)
\\
&\hskip 3cm \le \int_D e^{2s\varphi}|\Delta_g u|^2dV_g+\int_\Gamma e^{2s\varphi}\sigma(|\nabla_g u|_g^2+\sigma^2|u|^2)dS_g
\\
&\hskip 8.5cm+\int_\mathcal{S} e^{2s\varphi}\sigma|\nabla_{\tau_g} u|_g^2dS_g.
\end{align*}
Since $-\Delta_g u=Pu-\langle X, \nabla_g u\rangle-pu$, we have
\begin{align}\label{Xp}
& \int_D e^{2s\varphi}|\Delta_g u|^2dV_g\le 3\int_D e^{2s\varphi}|P u|^2dV_g
\\
&\hskip 3cm +3\int_D e^{2s\varphi}|\langle X,\nabla_g u\rangle|^2dV_g+3\int_D e^{2s\varphi}|pu|^2dV_g. \nonumber
\end{align}
Moreover, we get
\begin{equation}\label{X}
\int_D e^{2s\varphi}|\langle X,\nabla_g u\rangle|^2dV_g\le \|X\|_{L^\infty(D)^n}^2\int_D e^{2s\varphi}|\nabla_g u|^2dV_g
\end{equation}
and by H\"older's inequality
\[
\int_De^{2s\varphi}|pu|^2dV_g \le \|p\|_{L^r(D)}^2\|e^{2s\varphi}u\|_{L^\frac{2r}{r-2}(D)}^2\le \mathbf{c}_\ast^2 \|p\|_{L^r(D)}^2\|e^{s\varphi}u\|_{H^1(D)}^2,
\]
where $\mathbf{c}_\ast=\mathbf{c}_\ast (D)>0$ denotes the constant of the embedding $H^1(D)\hookrightarrow L^\frac{2r}{r-2}(D)$. Therefore, by modifying the constant $\mathbf{c}_0=\mathbf{c}_0(g,D,\phi)>0$, we get
\begin{equation}\label{p}
\mathbf{c_0} \int_De^{2s\varphi} |pu|^2dV_g  \le \|p\|_{L^r(D)}^2\int_D e^{2s\varphi}(|\nabla_g u|^2+\sigma^2|u|^2)dV_g.
\end{equation}
Applying \eqref{X} and \eqref{p} to \eqref{Xp}, we have
\[
\int_D e^{2s\varphi}|\Delta_g u|^2dV_g\le \mathbf{c}\left(\int_D e^{2s\varphi}|P u|^2dV_g+\int_D e^{2s\varphi}(|\nabla_g u|^2+\sigma^2 |u|^2)dV_g\right),
\]
where $\mathbf{c}=\mathbf{c}(\zeta_0)>0$ denotes a generic constant. By modifying $\gamma_\ast=\gamma_\ast(\zeta_0)\ge 1$, we may and do assume that
\[
\mathbf{c}_0 \gamma-\mathbf{c}\ge \mathbf{c}_0 \gamma /2,\quad \gamma\ge \gamma_\ast.
\]
In this case, by $\sigma\ge 1$, we have for any $\gamma\ge \gamma_\ast$ and $s\ge s_\ast$,
\begin{align*}
&\mathbf{c}_0\left(\int_D e^{2s\varphi}\sigma(\gamma|\nabla_g u|_g^2+\gamma\sigma^2|u|^2)dV_g+\int_\mathcal{S} e^{2s\varphi}\sigma(|\partial_{\nu_g} u|^2+\sigma^2|u|^2)dS_g\right)
\\
&\hskip 1cm \le \mathbf{c}\int_D e^{2s\varphi}|P u|^2dV_g+(\mathbf{c}_0/2)\int_D e^{2s\varphi}\sigma(\gamma|\nabla_g u|_g^2+\gamma\sigma^2|u|^2)dV_g
\\
&\hskip 3cm +\int_\Gamma e^{2s\varphi}\sigma(|\nabla_g u|_g^2+\sigma^2|u|^2)dS_g
+\int_\mathcal{S} e^{2s\varphi}\sigma|\nabla_{\tau_g} u|_g^2dS_g,
\end{align*}
which completes the proof by modifying the generic constant $\mathbf{c}=\mathbf{c}(\zeta_0)>0$.
\end{proof}

\section*{Acknowledgement}
This work was supported by JSPS KAKENHI Grant Numbers JP25K17280 and JP23KK0049.

\end{document}